\documentclass[12pt]{article}

\usepackage{graphicx}
\usepackage{amsmath,amsthm,amssymb}
\usepackage{mathrsfs}

\usepackage{hyperref}

\usepackage[left=2cm,right=2cm,top=3.5cm,bottom=3.5cm]{geometry}

\usepackage{color}
\usepackage{array}

\theoremstyle{plain}
\newtheorem{thm}{Theorem}[section]
\newtheorem{prop}[thm]{Proposition}

\newtheorem{cor}[thm]{Corollary}
\theoremstyle{definition}
\newtheorem{defn}[thm]{Definition}

\newtheorem*{ex*}{Example}
\theoremstyle{remark}
\newtheorem*{rem}{Remark}


\newcommand{\vr}{\varrho}
\newcommand{\vc}[1]{{\bf #1}}
\newcommand{\vu}{\vc{u}}
\newcommand{\vm}{\vc{m}}
\newcommand{\vB}{\vc{B}}
\newcommand{\vx}{\vc{x}}
\newcommand{\Div}{{\rm div}\,}
\newcommand{\Curl}{{\rm curl}\,}
\newcommand{\Grad}{\nabla}
\newcommand{\dx}{\,{\rm d}\vc{x}}
\newcommand{\dt}{\,{\rm d}t}

\newcommand{\dr}{\,{\rm d}r}
\newcommand{\intO}[1]{\int_{\Omega}#1\dx\,}
\newcommand{\intQ}[1]{\int_{Q}#1\dx\,}
\newcommand{\intQi}[1]{\int_{Q_i}#1\dx\,}
\newcommand{\DC}{C^\infty_{\rm c}}
\newcommand{\R}{\mathbb{R}}
\newcommand{\bfphi}{\boldsymbol{\varphi}}

\newcommand{\half}{\frac{1}{2}}
\newcommand{\trans}{^{\mathsf{T}}}

\begin{document}

\title{Non--uniqueness of entropy--conserving solutions to the ideal compressible MHD equations} 

\author{Christian Klingenberg \and Simon Markfelder}

\date{\today}

\maketitle


\centerline{Department of Mathematics, W\"urzburg University}

\centerline{Emil-Fischer-Str. 40, 97074 W\"urzburg, Germany}

\bigskip

\begin{abstract}
In this note we consider the ideal compressible magneto--hydrodynamics (MHD) equations in a special two dimensional setting. We show that there exist particular initial data for which one obtains infinitely many entropy--conserving weak solutions by using the convex integration technique. Our result is also true for the isentropic case.
\end{abstract}


\section{Introduction} 

We consider the ideal compressible magneto--hydrodynamics (MHD) equations 
\begin{equation} \label{MHD}
	\begin{split}
		\partial_t \vr + \Div (\vr\vu) &=0,\\
		\partial_t (\vr\vu) + \Div (\vr\vu \otimes \vu) + \Grad p - (\Curl \vB )\times \vB &= 0,\\
		\partial_t \left(\half \vr |\vu|^2 + \vr e(\vr,p) + \half |\vB|^2 \right) + \Div \left[\left(\half \vr |\vu|^2 + \vr e(\vr,p) + p + |\vB|^2 \right) \vu\right] - \Div \big((\vB\cdot \vu) \vB\big) &= 0, \\
		\partial_t \vB + \Curl (\vB\times \vu) &= 0,\\
		\Div \vB &=0.
	\end{split}
\end{equation}
The unknown functions in \eqref{MHD} are the density $\vr > 0$, the pressure $p>0$, the velocity $\vu\in\R^3$ and the magnetic field $\vB\in\R^3$, which are all functions of the time $t\in[0,T)$ and the spatial variable $\vx=(x,y,z)\trans\in\R^3$. The internal energy $e$ is a given function of the density $\vr$ and the pressure $p$. 

In this note we consider a special two dimensional setting. Let $\Omega\subset\R^2$ a bounded two dimensional spacial domain. We consider $\vu=(u,v,0)\trans$ and $\vB=(0,0,b) \trans$ and furthermore we let all the unknowns only depend on $(x,y)\in\Omega$. From now on we write $\vu=(u,v)\trans\in\R^2$ and $\vx=(x,y)\trans\in\Omega\subset\R^2$ for the corresponding two dimensional vectors. Then the MHD system \eqref{MHD} turns into 
\begin{equation} \label{MHD.setting}
\begin{split}
	\partial_t \vr + \Div (\vr\vu) &=0,\\
	\partial_t (\vr\vu) + \Div \big(\vr\vu \otimes \vu\big) + \Grad \Big( p + \half b^2\Big) &= 0,\\
	\partial_t \left(\half \vr |\vu|^2 + \vr e(\vr,p) + \half  b^2 \right) + \Div \left[\left(\half \vr |\vu|^2 + \vr e(\vr,p) + p + b^2 \right) \vu \right] &= 0, \\
	\partial_t b + \Div (b\vu) &= 0.
\end{split}
\end{equation} 
Note that in \eqref{MHD.setting} $\Div,\Grad$ are two dimensional differential operators in contrast to \eqref{MHD}, where they are three dimensional differential operators.

We endow system \eqref{MHD.setting} with initial conditions  
\begin{equation} \label{initial}
	\big(\vr,p,\vu,b\big)(0,\cdot) = \big(\vr_0,p_0,\vu_0,b_0\big)
\end{equation}
and impermeability boundary conditions
\begin{equation} \label{boundary}
	\vu\cdot \vc{n}\big|_{\partial \Omega} = 0.
\end{equation}

\begin{defn} \label{defn:weak}
	A 4-tuple $(\vr,p,\vu,b)\in L^\infty \big([0,T)\times \Omega; (0,\infty)\times(0,\infty)\times \R^2 \times \R\big)$ is a weak solution to \eqref{MHD.setting}, \eqref{initial}, \eqref{boundary} if the following equations hold for all test functions $\varphi,\phi,\psi\in \DC\big([0,T)\times\R^2\big)$ and $\bfphi\in\DC\big([0,T)\times\R^2;\R^2\big)$ with $\bfphi\cdot \vc{n}\big|_{\partial \Omega}$:
	\begin{equation} \label{weak1}
		\int_0^T\intO{\big[\vr \partial_t\varphi + \vr\vu\cdot\Grad\varphi\big]}\dt + \intO{\vr_0 \varphi(0,\cdot)} = 0 
	\end{equation}
	\\
	\begin{equation} \label{weak2}
		\int_0^T\intO{\left[\vr\vu\cdot \partial_t\bfphi + \big(\vr\vu\otimes\vu \big) : \Grad\bfphi + \Big(p + \half b^2\Big)\Div\bfphi\right]}\dt + \intO{\vr_0 \vu_0\cdot\bfphi(0,\cdot)} = 0
	\end{equation}
	\\
	\begin{equation} \label{weak3}
		\begin{split}
		\int_0^T\intO{\bigg[\Big(\half \vr |\vu|^2 + \vr e(\vr,p) + \half b^2 \Big)\partial_t\phi 
		+ \Big(\half \vr |\vu|^2 + \vr e(\vr,p) + p + b^2 \Big) \vu \cdot\Grad\phi\bigg]}\dt \quad& \\
		+ \intO{\Big(\half \vr_0 |\vu_0|^2 + \vr_0 e(\vr_0,p_0) + \half b_0^2 \Big) \phi(0,\cdot)} &= 0 
		\end{split}
	\end{equation}
	\\
	\begin{equation} \label{weak4}
		\int_0^T\intO{\big[b\partial_t\psi + b\vu\cdot\Grad\psi\big]}\dt + \intO{b_0 \psi(0,\cdot)} = 0
	\end{equation} 
\end{defn} 

\begin{rem}
	The impermeability boundary condition is represented by the choice of the test functions.
\end{rem}

\begin{rem}
	Note that we exclude vacuum for our consideration, i.e. in this note $\vr>0$, $p>0$.
\end{rem} 

It is a well--known fact that there may exist physically non--relevant weak solutions to conservation laws. Hence one has to introduce additional selection criteria in order to single out the physically relevant weak solutions. A common approach is to impose an entropy inequality. However for the MHD system \eqref{MHD} there is no known entropy. 

Note that for the Euler system the functions $$\eta = -\vr s(\vr,p) \quad\text{ and }\quad q= -\vr s(\vr,p) \vu$$     
form an entropy pair. Here the specific entropy $s=s(\vr,p)$ is a given function as well as the internal energy $e$ and note that these functions are interrelated by Gibbs' relation.

It is a straightforward computation to show that a strong solution to the MHD system \eqref{MHD} fulfills
\begin{equation} \label{eneq}
\partial_t \big(\vr s(\vr,p)\big) + \Div \big(\vr s(\vr,p)  \vu\big) = 0.
\end{equation}
Although this suggests that $(\eta,q)$ is an entropy pair for the MHD system, too, $(\eta,q)$ is \emph{not} an entropy pair for MHD, cf. \cite{ChaKli16}. However $(\eta,q)$ is still used as a selection criterion in the literature for example if Riemann problems are considered and one wants to find out whether or not a shock is physical, see e. g. \cite{Torrilhon03}. We misuse terminology and call $\eta$ and $q$ still entropy, entropy flux respectively. 

The weak solutions, whose existence we will prove in this note, fulfill the entropy equation \eqref{eneq} in the weak sense. We call such solutions entropy--conserving.

\begin{defn} \label{defn:conserving} 
	A weak solution $(\vr,p,\vu,b)$ to \eqref{MHD.setting}, \eqref{initial}, \eqref{boundary} is called \emph{entropy--conserving}, if for all test functions $\varphi\in\DC\big([0,T)\times\R^2\big)$ the entropy equation 
	\begin{equation} \label{conserving}
		\int_0^T\intO{\big[\vr s(\vr,p) \partial_t \varphi + \vr s(\vr,p)\vu\cdot \Grad\varphi\big]}\dt + \intO{\vr_0 s(\vr_0,p_0) \varphi(0,\cdot)} = 0
	\end{equation}
	holds.
\end{defn}

The following theorem is our main result: 
\begin{thm} \label{thm:main}
	Let $\vr_0,p_0\in L^\infty(\Omega;(0,\infty))$ and $b_0\in L^\infty(\Omega)$ be arbitrary piecewise constant functions. Then there exists $\vu_0\in L^\infty (\Omega;\R^2)$ such that there are infinitely many entropy--conserving weak solutions to \eqref{MHD.setting} with initial data $\vr_0,p_0,\vu_0,b_0$ and impermeability boundary condition. These solutions have the property that $\vr$, $p$ and $b$ do not depend on time; in other words $\vr\equiv \vr_0$, $p\equiv p_0$ and $b\equiv b_0$.
\end{thm}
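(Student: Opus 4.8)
The plan is to freeze $(\vr,p,b)$ at the initial data and obtain the velocity by convex integration, in the spirit of De Lellis and Sz\'ekelyhidi.

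\emph{Reduction.}
Set $\vr\equiv\vr_0$, $p\equiv p_0$, $b\equiv b_0$. Using that the test functions vanish at $t=T$, the equations \eqref{weak1}, \eqref{weak4}, \eqref{conserving} then collapse to $\Div(\vr_0\vu)=0$, $\Div(b_0\vu)=0$, $\Div\big(\vr_0 s(\vr_0,p_0)\vu\big)=0$ on $\Omega$, with $\vu\cdot\vc{n}=0$ on $\partial\Omega$. Write $\Omega=\bigcup_i\Omega_i$ for the finitely many open cells on which $(\vr_0,p_0,b_0)$ equals a constant $(\vr_i,p_i,b_i)$, and $\Gamma_{ij}$ for the interfaces. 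The first identity forces $\Div\vu=0$ on each $\Omega_i$, and comparing the normal traces of $\vr_0\vu$, $b_0\vu$, $\vr_0 s(\vr_0,p_0)\vu$ across $\Gamma_{ij}$ shows that it is consistent (and, for generic data, necessary) to demand $\vu\cdot\vc{n}=0$ on every $\Gamma_{ij}$ too. If in addition $\half\vr_0|\vu|^2$ is kept constant on each cell --- equal to a value $\chi_i>0$ we are free to pick --- then the energy balance \eqref{weak3} holds automatically. Thus everything comes down to the momentum equation \eqref{weak2} with these side conditions: on the interior of each $\Omega_i$ this is just incompressible Euler (with vanishing pressure gradient and constant kinetic energy), and across $\Gamma_{ij}$ it is the Rankine--Hugoniot relation $\big[\vr_0\vu\otimes\vu+(p_0+\half b_0^2)\mathbb{I}\big]\vc{n}=0$.

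\emph{Differential inclusion.}
Following the standard relaxation, introduce the momentum $\vm=\vr_0\vu$ and the full momentum flux $\mathbb{U}$, impose the linear conservation laws $\partial_t\vm+\Div\mathbb{U}=0$, $\Div\vm=0$ on $(0,T)\times\Omega$ with $\vm\cdot\vc{n}=0$ on $(0,T)\times\partial\Omega$ and on each $(0,T)\times\Gamma_{ij}$, and the pointwise constraint
\[(\vm,\mathbb{U})(t,\vx)\in K_\vx:=\Big\{(\vm,\mathbb{U}):\ |\vm|^2=2\vr_0(\vx)\chi(\vx),\ \ \mathbb{U}=\tfrac1{\vr_0(\vx)}\,\vm\otimes\vm+\big(p_0(\vx)+\half b_0(\vx)^2\big)\mathbb{I}\Big\}.\]
Any $(\vm,\mathbb{U})$ solving the linear system with $(\vm,\mathbb{U})\in K_\vx$ a.e.\ yields $\vu=\vm/\vr_0$ satisfying \eqref{weak1}--\eqref{weak4} and \eqref{conserving}; the interface part of the momentum equation is automatic, because $\partial_t\vm$ carries no singular part on $\Gamma_{ij}$, so $\partial_t\vm+\Div\mathbb{U}=0$ forces the normal trace of $\mathbb{U}$ to be continuous there, which is precisely the Rankine--Hugoniot relation above. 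The key is that the singular forcing $\Grad(p_0+\half b_0^2)$ is never faced on its own: it is absorbed into the (bounded, but discontinuous) flux $\mathbb{U}$.

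\emph{Subsolution and convex integration.}
The linear system is exactly the one of incompressible Euler, so the wave cone and the relevant $\Lambda$-convex hull of $K_\vx$ are the classical ones. One then takes the $\chi_i$ large enough (essentially larger than the oscillation of $p_0+\half b_0^2$), constructs a \emph{subsolution} --- a bounded pair $(\bar\vm,\bar\mathbb{U})$ solving the linear system and the boundary conditions, lying strictly inside the hull for $t>0$, with $\bar\vm(0,\cdot)=\vr_0\vu_0$ for a suitable $\vu_0$ satisfying $\half\vr_0|\vu_0|^2=\chi$ --- and feeds it into the De~Lellis--Sz\'ekelyhidi scheme (iterated localized plane waves along the wave cone together with a Baire-category argument). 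This produces infinitely many exact solutions $(\vm,\mathbb{U})$ of the inclusion with the prescribed initial and boundary data; the corresponding $\vu=\vm/\vr_0$, together with $\vr_0,p_0,b_0$, are the desired infinitely many entropy-conserving weak solutions, with $\vr\equiv\vr_0$, $p\equiv p_0$, $b\equiv b_0$ by construction. The isentropic case is identical, the entropy equation then being automatic.

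\emph{Main obstacle.}
The delicate point is the third step, and in particular reconciling the side conditions at $t=0$ and at the interfaces: the energy equation pins the magnitude of $\vu_0$ cell-wise, the continuity/magnetic/entropy equations force $\vu$ (hence $\vu_0$) to be tangent to the interfaces, yet the jump of $p_0+\half b_0^2$ across $\Gamma_{ij}$ still has to be balanced in the momentum equation --- so $\vu_0$ must itself be a suitably ``turbulent'' field near the interfaces and the subsolution must be built around it. One must also check that the localized perturbations of the convex-integration scheme can be carried out across the interfaces, where the coefficients $\vr_0,p_0,b_0$, and with them the target set $K_\vx$, jump.
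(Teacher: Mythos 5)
Your overall strategy --- freeze $(\vr,p,b)$ at the piecewise constant initial data and obtain the velocity by convex integration of the momentum equation --- is exactly the one the paper follows, and your reduction of the continuity, induction, entropy and energy equations is correct. But the step you yourself label the ``main obstacle'' is not a technicality left to be checked; it is the one place where an actual idea is needed, and your proposal does not supply it. Indeed, taken at face value your own reduction makes the difficulty look fatal: you correctly argue that $\vm$ must be tangent to each interface $\Gamma_{ij}$ and that $\partial_t\vm+\Div\mathbb{U}=0$ forces the normal trace of $\mathbb{U}$ to match across $\Gamma_{ij}$; but with your constraint set $\mathbb{U}\vc{n}=\frac{\vm\cdot\vc{n}}{\vr_0}\,\vm+\big(p_0+\half b_0^2\big)\vc{n}$, so tangency of $\vm$ leaves only $\big(p_0+\half b_0^2\big)\vc{n}$, whose jump is not removed by taking the $\chi_i$ large. ``$\chi_i$ large enough'' is not the right condition; what is needed is an exact normalization.

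The missing idea is to fix one global constant $\Lambda>\max_i\big(p_i+\half b_i^2\big)$ and set the cell-wise kinetic energy to $\chi_i=\Lambda-p_i-\half b_i^2$. Then the isotropic part of the total momentum flux,
\[
\half\frac{|\vm|^2}{\vr_0}+p_0+\half b_0^2=\chi_i+p_i+\half b_i^2=\Lambda,
\]
is the \emph{same} constant on every cell, so the flux decomposes as $\big(\frac{\vm\otimes\vm}{\vr_0}-\half\frac{|\vm|^2}{\vr_0}\mathbb{I}\big)+\Lambda\,\mathbb{I}$, and the $\Lambda\,\mathbb{I}$ part integrates to zero against every admissible $\bfphi$ over all of $\Omega$. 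With this choice there is no interface condition left to satisfy: one applies the known convex-integration statement (Proposition \ref{prop:convint}) \emph{independently on each cell} $Q_i$, with density $\vr_i$, kinetic energy $C=\Lambda-p_i-\half b_i^2$ and impermeability on the whole of $\partial Q_i$, and pieces the resulting momenta together. In particular your second worry --- whether the localized plane-wave perturbations can be carried across interfaces where the target set $K_\vx$ jumps --- never arises, because the construction never crosses an interface; and no global subsolution adapted to a ``turbulent'' $\vu_0$ near the interfaces has to be built. As written, your argument reduces the theorem to a subsolution construction whose feasibility at the interfaces is precisely what is in doubt; the normalization above is what closes that gap.
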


The proof of Theorem \ref{thm:main} relies on the non--uniqueness proof for the full Euler system provided in \cite{FKKM17} and consists of two main ideas. The first one is to make use of a result (see Proposition \ref{prop:convint} below) which was proved by Feireisl \cite{Feireisl16} and also by Chiodaroli \cite{Chiodaroli14}. This result is based on the convex integration method, that was developed by De Lellis and Sz\'ekelyhidi \cite{DelSze09,DelSze10} in the context of the pressureless incompressible Euler equations. The second idea is the fact that $\vr$, $p$ and $b$ can be chosen \emph{piecewise} constant, what was observed originally by Luo, Xie and Xin \cite{LuoXieXin16}.

Note that non--uniqueness of weak solutions fulfilling an entropy inequality (even in one space dimension) is well--known: There exist Riemann initial data for which one has more than one solutions, see e. g. Torrilhon \cite{Torrilhon03} and references therein. 

Note furthermore that there is also a convex integration result to incompressible ideal MHD by Bronzi et al. \cite{BroLopNus15}. There the same two dimensional setting is considered as in the present note. In contrast to this note, where a convex integration result for Euler is used, Bronzi et al. apply the convex integration techique directly to an incompressible version of \eqref{MHD.setting}.

\section{Proof of the main result}

In order to prove Theorem \ref{thm:main} we will make use of the following proposition whose proof is based on convex integration. 

\begin{prop} \label{prop:convint}
	Let $Q\subset \R^2$ a bounded domain, $\vr>0$ and $C>0$ positive constants. Then there exists $\vm_0 \in L^\infty(Q; \R^2)$ such that there are infinitely many functions
	\begin{equation*} 
		\vm \in L^\infty\big((0,T) \times Q; \R^2\big) \cap C_{\rm weak}\big([0,T]; L^2(Q; \R^2)\big)
	\end{equation*}
	satisfying
	\begin{align}
		\int_0^T \intQ{ \vm \cdot \Grad \varphi }\dt &= 0 \label{ci1} \\ 
		\int_0^T \intQ{ \left[ \vm \cdot \partial_t \bfphi + \left( \frac{ \vm \otimes \vm}{\vr} - \frac{1}{2} \frac{|\vm|^2}{\vr} \mathbb{I} \right)
		: \Grad \bfphi \right] }\dt + \intQ{ \vm_0 \cdot \bfphi(0,\cdot) } &= 0 \label{ci2}
	\end{align}
	for all test functions $\varphi \in \DC ([0,T] \times \R^2)$ and $\bfphi \in \DC([0,T) \times \R^2; \R^2)$,
	and additionally
	\begin{align*}
		E_{\rm kin} = \frac{1}{2} \frac{|\vm|^2}{\vr} &= C \quad \text{ a.e. in }(0,T) \times Q,&\
		E_{0,\rm kin} = \frac{1}{2} \frac{|\vm_0|^2}{\vr} &= C \quad \text{ a.e. in }Q.
	\end{align*}
\end{prop}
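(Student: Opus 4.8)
The starting point is that, because $\vr>0$ is a constant, the substitution $\vu=\vm/\vr$ turns \eqref{ci1}, \eqref{ci2} together with the energy constraint into a rescaling of the incompressible Euler equations, for which the assertion is the by now classical convex-integration result of De~Lellis and Sz\'ekelyhidi (and, in the form stated here, of Feireisl \cite{Feireisl16} and Chiodaroli \cite{Chiodaroli14}). I would therefore prove the proposition by reproducing their scheme directly in the momentum variable $\vm$. First I would recast the problem as a differential inclusion: introduce an auxiliary field $\mathbb{U}$ with values in the space $\mathcal{S}^{2\times2}_0$ of symmetric traceless $2\times2$ matrices, together with a scalar $q$, and look for $(\vm,\mathbb{U},q)$ on $(0,T)\times Q$ solving the \emph{linear} conservation laws
\[
\Div\vm=0,\qquad \partial_t\vm+\Div\mathbb{U}+\Grad q=0
\]
in the weak sense, subject to the \emph{pointwise} constraint that $\mathbb{U}$ be the traceless part of $\vm\otimes\vm/\vr$ and that $\half|\vm|^2/\vr=C$ a.e. Any such $(\vm,\mathbb{U},q)$ yields, upon setting $q=\half|\vm|^2/\vr$, a solution of \eqref{ci1}--\eqref{ci2} with the required a.e. energy identity.

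Next I would carry out the two geometric ingredients of convex integration. The first is the determination of the wave cone $\Lambda$, i.e.\ the set of amplitudes $(\bar\vm,\bar{\mathbb{U}},\bar q)$ for which the linear system above admits localized plane-wave solutions; this is a routine computation and produces a cone large enough for the argument. The second, which is the technical heart of the proof, is the computation of the $\Lambda$-convex hull of the constraint set
\[
K=\Big\{(\vm,\mathbb{U}):\ \mathbb{U}=\tfrac{\vm\otimes\vm}{\vr}-\tfrac12\tfrac{|\vm|^2}{\vr}\mathbb{I},\ \ \half\tfrac{|\vm|^2}{\vr}=C\Big\}.
\]
Here one introduces the generalized energy $e(\vm,\mathbb{U})=\lambda_{\max}\!\big(\tfrac{\vm\otimes\vm}{\vr}-\mathbb{U}\big)$ and shows that the open set $\{e<C\}$ is precisely the interior of the hull, and that through every point of it passes a nondegenerate $\Lambda$-segment along which one may oscillate toward $K$. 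Establishing this hull/oscillation lemma, with its quantitative gain of kinetic energy along admissible perturbations, is the step I expect to be the main obstacle.

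With the geometry in hand I would fix the initial datum and produce one strict subsolution. A convenient choice is to take $\vm_0$ of constant modulus $|\vm_0|=\sqrt{2C\vr}$, so that the required initial identity $\half|\vm_0|^2/\vr=C$ holds, and to exhibit a field $(\vm,\mathbb{U},q)$ on $(0,T)\times Q$ solving the linear equations, having initial trace $\vm_0$, and satisfying the strict bound $e(\vm,\mathbb{U})<C$ a.e.\ for $t>0$. Such a subsolution can be produced by an explicit, localized construction as in \cite{Chiodaroli14,Feireisl16}, pushing the interior values strictly inside the hull; this guarantees that the set $X_0$ of subsolutions with initial datum $\vm_0$ is nonempty.

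Finally I would run the Baire-category argument. I would equip the closure of $X_0$ in a suitable weak-$*$ topology with a complete metric and consider the functional
\[
\vm\mapsto \int_0^T\intQ{\big(\half|\vm|^2/\vr-C\big)}\dt\ \le 0,
\]
which is lower semicontinuous, hence Baire-1, and therefore has a residual set of continuity points. The oscillation lemma from the second step shows that at any subsolution for which $\half|\vm|^2/\vr<C$ on a set of positive measure one can add arbitrarily weakly small perturbations that increase this functional by a fixed amount; consequently no such point can be a point of continuity, so at every continuity point the functional vanishes, i.e.\ $\half|\vm|^2/\vr=C$ a.e. Since the continuity points form a residual and hence uncountable set, each of them furnishes an exact solution of \eqref{ci1}--\eqref{ci2} with the prescribed a.e.\ energy, yielding the infinitely many momentum fields $\vm$ claimed.
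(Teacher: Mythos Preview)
Your sketch is a faithful outline of the De~Lellis--Sz\'ekelyhidi convex-integration scheme as adapted by Chiodaroli and Feireisl, and is essentially what lies behind the proposition. Note, however, that the paper does not supply its own proof here: it simply cites \cite[Theorem~13.6.1]{Feireisl16}. So you are not offering an alternative argument but rather reconstructing the content of the reference that the paper invokes.

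One small point worth tightening: in your third step you propose to \emph{choose} $\vm_0$ with $|\vm_0|=\sqrt{2C\vr}$ first and then manufacture a strict subsolution with that prescribed initial trace. In the standard argument the logic runs the other way---one begins with a trivial subsolution (say $\vm\equiv0$, $\mathbb{U}\equiv0$, which satisfies $e<C$ everywhere) on an open time interval containing $t=0$, runs the oscillation/Baire machinery on the full space--time cylinder, and \emph{obtains} $\vm_0$ as the $t=0$ trace of a limit point, which then automatically sits on the energy sphere. Prescribing $\vm_0$ in advance and then pushing the extension strictly inside the hull for $t>0$ while keeping the trace fixed is more delicate (the obvious constant-in-time extension has $e\equiv C$, not $e<C$), so if you keep your order of operations you should say more about how that step is carried out.
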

For the proof of Proposition \ref{prop:convint} we refer to \cite[Theorem 13.6.1]{Feireisl16}.

Now we are able to prove Theorem \ref{thm:main}. 
\begin{proof}
	Let $\vr_0,p_0\in L^\infty(\Omega;(0,\infty))$ and $b_0\in L^\infty(\Omega)$ given piecewise constant functions. Then there exist finitely many $Q_i\subset \Omega$ open and pairwise disjoint, such that $\Omega=\bigcup\limits_{i} \overline{Q_i}$ and $\vr_0\big|_{Q_i}=\vr_i$, $p_0\big|_{Q_i}=p_i$ and $b_0\big|_{Q_i}=b_i$ with constants $\vr_i,p_i>0$ and $b_i\in\R$.	We apply Proposition \ref{prop:convint} on each $Q_i$ to $\vr=\vr_i$ and $C=\Lambda - p_i - \half b_i^2$, where $\Lambda$ is a constant with $\Lambda>\max\limits_{i}\left(p_i + \half b_i^2\right)$. This yields $\vm_{0,i}\in L^\infty(Q_i;\R^2)$ and infinitely many $\vm_i\in L^\infty((0,T) \times Q_i; \R^2)$ with the properties given in Proposition \ref{prop:convint}. We then piece together the $\vm_{0,i}\in L^\infty(Q_i;\R^2)$ to $\vm_{0}\in L^\infty(\Omega;\R^2)$ and the $\vm_{i}\in L^\infty((0,T) \times Q_i; \R^2)$ to $\vm\in L^\infty((0,T) \times \Omega; \R^2)$. 
	
	We define $\vu_0:=\frac{\vm_0}{\vr_0} \in L^\infty(\Omega;\R^2)$ and for each $\vm$ we define a function $\vu:=\frac{\vm}{\vr_0} \in L^\infty\big((0,T) \times \Omega; \R^2\big)$. Furthermore we define $(\vr,p,b)\in L^\infty\big([0,T)\times \Omega; (0,\infty)\times(0,\infty) \times \R\big)$ by $\vr\equiv\vr_0$, $p\equiv p_0$ and $b\equiv b_0$. We claim that $(\vr,p,\vu,b)$ is an entropy--conserving weak solution to \eqref{MHD.setting} with initial data $\vr_0,p_0,\vu_0,b_0$.
	
	Let $\varphi,\phi,\psi\in \DC\big([0,T)\times\R^2\big)$ and $\bfphi\in\DC\big([0,T)\times\R^2;\R^2\big)$ with $\bfphi\cdot\vc{n}\big|_{\partial\Omega}$ arbitrary test functions. 
	Using \eqref{ci1} and \eqref{ci2}, we obtain the following.
	\begin{align*} 
		&\int_0^T\intO{\big[\vr \partial_t\varphi + \vr\vu\cdot\Grad\varphi\big]}\dt + \intO{\vr_0 \varphi(0,\cdot)} \\
		&= \sum\limits_i \vr_i \intQi{\left(\int_0^T \partial_t\varphi \dt + \varphi(0,\cdot)\right)} + \sum\limits_{i}\int_0^T\intQi{\vm_i\cdot\Grad\varphi }\dt \ \ =\ \ 0
	\end{align*}
	\\
	\begin{align*} 
		&\int_0^T\intO{\left[\vr\vu\cdot \partial_t\bfphi + \big(\vr\vu\otimes\vu \big) : \Grad\bfphi + \Big(p + \half b^2\Big)\Div\bfphi\right]}\dt + \intO{\vr_0 \vu_0\cdot\bfphi(0,\cdot)} \\
		&= \sum\limits_{i} \Bigg(\int_0^T \intQi{ \left[ \vm_i \cdot \partial_t \bfphi + \left( \frac{ \vm_i \otimes \vm_i}{\vr_i} - \frac{1}{2} \frac{|\vm_i|^2}{\vr_i} \mathbb{I} \right)
			: \Grad \bfphi \right] }\dt + \intQi{ \vm_{0,i} \cdot \bfphi(0,\cdot) } \Bigg) \\
		&\qquad + \sum\limits_{i} \int_0^T \intQi{  \left[\half \frac{|\vm_i|^2}{\vr_i} + \Big(p_i + \half b_i^2\Big)\right]\Div\bfphi }\dt \\
		&= \Lambda \int_0^T \intO{\Div\bfphi }\dt \ \ =\ \  0
	\end{align*}
	\\
	\begin{align*} 
		&\int_0^T\intO{\bigg[\Big(\half \vr |\vu|^2 + \vr e(\vr,p) + \half b^2 \Big)\partial_t\phi 
		+ \Big(\half \vr |\vu|^2 + \vr e(\vr,p) + p + b^2 \Big) \vu \cdot\Grad\phi\bigg]}\dt \\
		&\qquad+ \intO{\Big(\half \vr_0 |\vu_0|^2 + \vr_0 e(\vr_0,p_0) + \half b_0^2 \Big) \phi(0,\cdot)} \\
		&= \sum\limits_i \Big(\Lambda + \vr_i e(\vr_i,p_i) - p_i\Big) \intQi{\left(\int_0^T \partial_t\phi \dt + \phi(0,\cdot)\right)} \\
		&\qquad + \sum\limits_{i}\frac{\Lambda + \vr_i e(\vr_i,p_i) + \half b_i}{\vr_i}\int_0^T\intQi{\vm_i\cdot\Grad\phi }\dt \ \ =\ \ 0
	\end{align*}
	\\
	\begin{align*} 
		&\int_0^T\intO{\big[b\partial_t\psi + b\vu\cdot\Grad\psi\big]}\dt + \intO{b_0 \psi(0,\cdot)}  \\
		&= \sum\limits_i b_i \intQi{\left(\int_0^T \partial_t\psi \dt + \psi(0,\cdot)\right)} + \sum\limits_{i}\frac{b_i}{\vr_i}\int_0^T\intQi{\vm_i\cdot\Grad\psi }\dt \ \ =\ \ 0
	\end{align*} 

	We have shown that the equations \eqref{weak1} - \eqref{weak4} hold. Hence $(\vr,p,\vu,b)$ is indeed a weak solution. It remains to show that this solution is entropy--conserving. In other words we have to show that \eqref{conserving} holds. Let $\varphi\in\DC\big([0,T)\times\R^2\big)$ be an arbitrary test function. We obtain
	\begin{align*} 
		&\int_0^T\intO{\big[\vr s(\vr,p) \partial_t \varphi + \vr s(\vr,p)\vu\cdot \Grad\varphi\big]}\dt + \intO{\vr_0 s(\vr_0,p_0) \varphi(0,\cdot)} \\
		&= \sum\limits_i \vr_i s(\vr_i,p_i) \intQi{\left(\int_0^T \partial_t\varphi \dt + \varphi(0,\cdot)\right)} + \sum\limits_{i} s(\vr_i,p_i) \int_0^T\intQi{\vm_i\cdot\Grad\psi }\dt \ \ =\ \ 0.
	\end{align*}
	Thus $(\vr,p,\vu,b)$ is an entropy--conserving weak solution. Since there are infinitely many $\vm$ from Prop. \ref{prop:convint}, there are infinitely many entropy--conserving solutions $(\vr,p,\vu,b)$.
\end{proof}

\section{Isentropic MHD} 

In this section we extend our result to isentropic MHD equations. The isentropic MHD system reads 
\begin{equation} \label{isen.MHD}
\begin{split}
	\partial_t \vr + \Div (\vr\vu) &=0,\\
	\partial_t (\vr\vu) + \Div (\vr\vu \otimes \vu) + \Grad p(\vr) - (\Curl \vB )\times \vB &= 0,\\
	\partial_t \vB + \Curl (\vB\times \vu) &= 0,\\
	\Div \vB &=0.
\end{split}
\end{equation}
The unknown functions are the density $\vr>0$, the velocity $\vu\in\R^3$ and the magnetic field $\vB\in\R^3$. In contrast to the MHD system \eqref{MHD} the pressure $p$ in \eqref{isen.MHD} is not an unknown but a given function of the density, where $p(\vr)>0$ for all $\vr>0$. 

Again we consider a two dimensional setting. Let $\Omega\subset\R^2$ a bounded two dimensional spacial domain. We consider $\vu=(u,v,0)\trans$ and $\vB=(0,0,b) \trans$ and furthermore we let all the unknowns only depend on $(x,y)\in\Omega$. Then the isentropic MHD system \eqref{isen.MHD} turns into 
\begin{equation} \label{isen.MHD.setting}
\begin{split} 
	\partial_t \vr + \Div (\vr\vu) &=0,\\
	\partial_t (\vr\vu) + \Div \big(\vr\vu \otimes \vu\big) + \Grad \Big( p(\vr) + \half b^2\Big) &= 0,\\
	\partial_t b + \Div (b\vu) &= 0.
\end{split}
\end{equation} 

For the isentropic \emph{Euler} system, the energy $$\eta = \half \vr |\vu|^2 + P(\vr) + \half |\vB|^2$$ is an entropy. Here $P(\vr)$ is called pressure potential and is given by 
$$
P(\vr)=\vr\int_1^\vr\frac{p(r)}{r}\dr.
$$
Similar to the full MHD system considered above, one can show that the energy is \emph{not} an entropy for \eqref{isen.MHD} but strong solutions fulfill the corresponding energy equation
\begin{equation} \label{isen.enineq}
	\partial_t \left(\half \vr |\vu|^2 + P(\vr) + \half |\vB|^2 \right) + \Div \left[\left(\half \vr |\vu|^2 + P(\vr) + p(\vr) + |\vB|^2 \right) \vu\right] - \Div \big((\vB\cdot \vu) \vB\big) = 0. 
\end{equation}

Hence we will look for energy--conserving weak solutions. In the considered setting the energy equation \eqref{isen.enineq} turns into
\begin{equation*} 
	\partial_t \left(\half \vr |\vu|^2 + P(\vr) + \half b^2 \right) + \Div \left[\left(\half \vr |\vu|^2 + P(\vr) + p(\vr) + b^2 \right) \vu\right]  = 0. 
\end{equation*}

\begin{defn} \label{defn:isen.weak}
	A triple $(\vr,\vu,b)\in L^\infty \big([0,T)\times \Omega; (0,\infty)\times \R^2 \times \R\big)$ is a weak solution to \eqref{isen.MHD.setting} with initial data $\vr_0,\vu_0,b_0$ and impermeability boundary condition if the following equations hold for all test functions $\varphi,\psi\in \DC\big([0,T)\times\R^2\big)$ and $\bfphi\in\DC\big([0,T)\times\R^2;\R^2\big)$ with $\bfphi\cdot \vc{n}\big|_{\partial \Omega}$:
	\begin{equation} \label{isen.weak1}
		\int_0^T\intO{\big[\vr \partial_t\varphi + \vr\vu\cdot\Grad\varphi\big]}\dt + \intO{\vr_0 \varphi(0,\cdot)} = 0 
	\end{equation}
	\\
	\begin{equation} \label{isen.weak2}
		\int_0^T\intO{\left[\vr\vu\cdot \partial_t\bfphi + \big(\vr\vu\otimes\vu \big) : \Grad\bfphi + \Big(p(\vr) + \half b^2\Big)\Div\bfphi\right]}\dt 		+ \intO{\vr_0 \vu_0\cdot\bfphi(0,\cdot)} = 0
	\end{equation}
	\\
	\begin{equation} \label{isen.weak3}
	\int_0^T\intO{\big[b\partial_t\psi + b\vu\cdot\Grad\psi\big]}\dt + \intO{b_0 \psi(0,\cdot)} = 0
	\end{equation} 
	
	A weak solution is called energy--conserving if in addtion for all test functions $\phi\in\DC\big([0,T)\times\R^2\big)$ the energy equation 
	\begin{equation} \label{isen.conserving} 
	\begin{split}
		\int_0^T\intO{\bigg[\Big(\half \vr |\vu|^2 + P(\vr) + \half b^2 \Big)\partial_t\phi 
			+ \Big(\half \vr |\vu|^2 + P(\vr) + p(\vr) + b^2 \Big) \vu \cdot\Grad\phi\bigg]}\dt \quad& \\
		+ \intO{\Big(\half \vr_0 |\vu_0|^2 + P(\vr_0) + \half b_0^2 \Big) \phi(0,\cdot)} &= 0 
	\end{split}
	\end{equation}
	holds.
\end{defn}

The Cauchy problem for the isentropic MHD equations is ill--posed, too:

\begin{cor} \label{isen.MainResult}
	Let $\vr_0\in L^\infty(\Omega;(0,\infty))$ and $b_0\in L^\infty(\Omega)$ be arbitrary piecewise constant functions. Then there exists $\vu_0\in L^\infty (\Omega;\R^2)$ such that there are infinitely many energy--conserving weak solutions to \eqref{isen.MHD.setting} with initial data $\vr_0,\vu_0,b_0$ and impermeability boundary condition. These solutions have the property that $\vr$ and $b$ do not depend on time; in other words $\vr\equiv \vr_0$ and $b\equiv b_0$.
\end{cor}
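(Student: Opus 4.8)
The plan is to mimic the proof of Theorem~\ref{thm:main} almost verbatim, simply dropping the energy equation \eqref{weak3} from the list of things that must hold and instead verifying the isentropic energy equation \eqref{isen.conserving}. As before, since $\vr_0$ and $b_0$ are piecewise constant, we can partition $\Omega$ into finitely many open, pairwise disjoint pieces $Q_i$ with $\Omega = \bigcup_i \overline{Q_i}$ on which $\vr_0\big|_{Q_i} = \vr_i > 0$ and $b_0\big|_{Q_i} = b_i \in \R$ are constant. On each $Q_i$ we apply Proposition~\ref{prop:convint} with $\vr = \vr_i$ and with the kinetic energy level $C = \Lambda - p(\vr_i) - \half b_i^2$, where $\Lambda > \max_i\big(p(\vr_i) + \half b_i^2\big)$ is a fixed constant (this is exactly the choice that makes the momentum equation close, as in the proof of Theorem~\ref{thm:main}, with $p_i$ replaced by $p(\vr_i)$). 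This produces $\vm_{0,i}$ and infinitely many $\vm_i$ on each $Q_i$, which we piece together to $\vm_0 \in L^\infty(\Omega;\R^2)$ and $\vm \in L^\infty((0,T)\times\Omega;\R^2)$, and we set $\vu_0 := \vm_0/\vr_0$, $\vu := \vm/\vr_0$, $\vr \equiv \vr_0$, $b \equiv b_0$.

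Next I would verify the three defining equations \eqref{isen.weak1}--\eqref{isen.weak3}. The continuity equation \eqref{isen.weak1} and the magnetic equation \eqref{isen.weak3} are checked exactly as the first and fourth displays in the proof of Theorem~\ref{thm:main}: on each $Q_i$ one splits into a time-derivative-plus-initial-data term (which vanishes because $\int_0^T \partial_t\varphi\,\dt + \varphi(0,\cdot) = 0$ for compactly supported $\varphi$) and a term $\int_0^T\intQi{\vm_i\cdot\Grad\varphi}\dt$ (resp.\ with $\psi$), which vanishes by \eqref{ci1}. The momentum equation \eqref{isen.weak2} is checked as the second display: on each $Q_i$ one uses \eqref{ci2} for the terms $\vm_i\cdot\partial_t\bfphi$ and $\big(\vm_i\otimes\vm_i/\vr_i - \half|\vm_i|^2/\vr_i\,\mathbb{I}\big):\Grad\bfphi$ together with $\vm_{0,i}\cdot\bfphi(0,\cdot)$, and then the remaining terms combine to $\big(\half|\vm_i|^2/\vr_i + p(\vr_i) + \half b_i^2\big)\Div\bfphi = \Lambda\,\Div\bfphi$ on $Q_i$, so that the whole sum equals $\Lambda\int_0^T\intO{\Div\bfphi}\dt = 0$ since $\bfphi(T,\cdot) = 0$ and $\bfphi$ is compactly supported in $\R^2$.

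Finally I would verify energy conservation \eqref{isen.conserving}. On each $Q_i$ the energy density is $\half\vr_i|\vu|^2 + P(\vr_i) + \half b_i^2 = \half|\vm_i|^2/\vr_i + P(\vr_i) + \half b_i^2 = \Lambda - p(\vr_i) + P(\vr_i) + \half b_i^2 =: K_i$, which is a \emph{constant} on $Q_i$ thanks to the $E_{\rm kin} = C$ clause of Proposition~\ref{prop:convint}; likewise $\half|\vm_{0,i}|^2/\vr_i + P(\vr_i) + \half b_i^2 = K_i$. Hence the time-derivative part of \eqref{isen.conserving} contributes $\sum_i K_i \intQi{\big(\int_0^T\partial_t\phi\,\dt + \phi(0,\cdot)\big)} = 0$. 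The flux part is $\big(\half\vr|\vu|^2 + P(\vr) + p(\vr) + b^2\big)\vu = \big(\Lambda - p(\vr_i) + P(\vr_i) + p(\vr_i) + \half b_i^2 + \half b_i^2\big)\vu$; wait --- more simply, on $Q_i$ this coefficient is the constant $c_i := \Lambda + P(\vr_i) + \half b_i^2$ times $\vu = \vm_i/\vr_i$ (using $E_{\rm kin} = \Lambda - p(\vr_i) - \half b_i^2$ so that $\half\vr_i|\vu|^2 + P(\vr_i) + p(\vr_i) + b^2$ becomes a constant), so the flux part contributes $\sum_i (c_i/\vr_i)\int_0^T\intQi{\vm_i\cdot\Grad\phi}\dt = 0$ by \eqref{ci1}. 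Thus $(\vr,\vu,b)$ is an energy-conserving weak solution, and since Proposition~\ref{prop:convint} supplies infinitely many $\vm$, we obtain infinitely many such solutions. The only mildly delicate point --- and the one I would present most carefully --- is the bookkeeping that makes \emph{both} the momentum flux and the energy flux simultaneously reduce to constants times $\Div\bfphi$ (or constants times $\vm_i\cdot\Grad\phi$) on each $Q_i$; this works precisely because $\vr$, $p(\vr)$ and $b$ are constant on $Q_i$ and $|\vm_i|^2/\vr_i$ is pinned to the value $2C$ by convex integration, exactly the mechanism already used for the full system.
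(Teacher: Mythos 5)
Your proposal is correct in substance, but it takes a different route from the paper. The paper does not redo the convex--integration construction at all: it sets $p_0:=p(\vr_0)$, observes that one may \emph{choose} the internal energy law $e(\vr,p)$ so that $\vr_0 e(\vr_0,p_0)=P(\vr_0)$, and then simply invokes Theorem~\ref{thm:main} for the full system with data $\vr_0,p_0,b_0$; since $\vr\equiv\vr_0$ and $p\equiv p_0$ for those solutions, the energy equation \eqref{weak3} of the full system literally becomes the isentropic energy equation \eqref{isen.conserving}, and \eqref{isen.weak1}--\eqref{isen.weak3} are a subset of \eqref{weak1}--\eqref{weak4}. Your direct re-derivation --- same partition into $Q_i$, same application of Proposition~\ref{prop:convint} with $C=\Lambda-p(\vr_i)-\half b_i^2$, same cancellation mechanism --- reaches the same conclusion and has the advantage of avoiding the paper's slightly artificial device of tuning the constitutive function $e$ to match $P$; the paper's reduction buys brevity at the cost of that extra observation. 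Two small blemishes in your write-up, neither fatal: (i) the vanishing of $\Lambda\int_0^T\intO{\Div\bfphi}\dt$ is \emph{not} a consequence of $\bfphi$ being compactly supported in $\R^2$ (the support may well meet the bounded set $\overline\Omega$); it follows from the divergence theorem together with the impermeability condition $\bfphi\cdot\vc{n}\big|_{\partial\Omega}=0$ imposed on the test functions. (ii) Your constant $K_i$ should be $\Lambda-p(\vr_i)+P(\vr_i)$ (the two $\half b_i^2$ terms cancel), and correspondingly the first expression you write for the energy--flux coefficient is off by $\half b_i^2$ before you correct it to $c_i=\Lambda+P(\vr_i)+\half b_i^2$; since only the constancy of these quantities on each $Q_i$ is used, the argument is unaffected.
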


\begin{proof} 
	Let $\vr_0\in L^\infty(\Omega;(0,\infty))$ and $b_0\in L^\infty(\Omega)$ given piecewise constant functions. Set furthermore $p_0:=p(\vr_0)$. Then $p_0\in L^\infty(\Omega;(0,\infty))$ is a piecewise constant function. Additionally we can choose the function $e(\vr,p)$ in such a way that $\vr_0 e(\vr_0,p_0)=P(\vr_0)$. We know from Theorem \ref{thm:main} that there exists an initial velocity $\vu_0\in L^\infty(\Omega;\R^2)$ such that there are infinitely many entropy--conserving weak solutions $(\vr\equiv\vr_0,p\equiv p_0,\vu,b\equiv b_0)$ to \eqref{MHD.setting} with initial data $\vr_0,p_0,\vu_0,b_0$. It is easy to check that for each of these solutions, the triple $(\vr\equiv\vr_0,\vu,b \equiv b_0)$ is an energy--conserving weak solution to the isentropic MHD equations \eqref{isen.MHD.setting} with initial data $\vr_0,\vu_0,b_0$ in the sense of Definition \ref{defn:isen.weak}.
\end{proof}

\section*{Acknowledgement}
The authors thank Bruno Despres for fruitful discussions.



\end{document}